\newtheorem{thm}{Theorem}[section]
\newtheorem{lem}[thm]{Lemma}
\theoremstyle{definition}
\theoremstyle{remark}
\newtheorem{rem}[thm]{Remark}
\numberwithin{equation}{section}
\newcommand{\ed}{\end {document}}
\title[Global $\dot H^1\cap \dot H^{-1}$  Solutions]
{Global $\dot H^1 \cap \dot H^{-1}$ solutions to a logarithmically regularized $2D$ Euler
equation}
\author[H. Dong]{Hongjie Dong}
\address[H. Dong]{Division of Applied Mathematics, Brown University,
182 George Street, Providence, RI 02912, USA}
\email{Hongjie\_Dong@brown.edu}
\author[D. Li]{Dong Li}
\address[D. Li]{Department of Mathematics, University of British Columbia, Vancouver BC Canada V6T 1Z2}%
\email{mpdongli@gmail.com}
\begin{document}
\begin{abstract}
We construct global $\dot H^1\cap \dot H^{-1}$ solutions to a logarithmically modified 2D
Euler vorticity equation. Our main tool is a new logarithm interpolation inequality which exploits
the $L^{\infty-}$-conservation of the vorticity.
\end{abstract}

\maketitle

\section{Introduction}
The usual 2D Euler equation takes the form
\begin{align} \label{V_usual}
 \begin{cases}
  \partial_t u + (u\cdot \nabla) u + \nabla p =0, \quad (t,x) \in \mathbb R \times \mathbb R^2, \\
  \nabla \cdot u =0, \\
  u \bigr|_{t=0}=u_0,
 \end{cases}
\end{align}
where $u=(u_1,u_2)$ denotes the velocity and $p$ is the pressure. Introduce the vorticity function
$\omega = -\partial_2 u_1 + \partial_1 u_2$. Then in vorticity formulation we have the equation
\begin{align} \label{E_usual}
\begin{cases}
\partial_t \omega + u \cdot \nabla \omega =0,  \quad (t,x) \in \mathbb R\times \mathbb R^2,\\
u= \nabla^{\perp} \psi=(-\partial_2 \psi, \partial_1 \psi),  \;\; \Delta \psi= \omega, \\
\omega\big|_{t=0}= \omega_0.
\end{cases}
\end{align}
Under some suitable regularity assumptions, the second equations in \eqref{E_usual} can be written as
a single equation
\begin{align}
 u= \Delta^{-1} \nabla^{\perp} \omega, \label{uw}
\end{align}
which is the usual Biot--Savart law.  We can then rewrite \eqref{E_usual} more compactly as
\begin{align*}
 \partial_t \omega + \Delta^{-1} \nabla^{\perp} \omega \cdot \nabla \omega =0.
\end{align*}

It is well-known that the system \eqref{V_usual} is globally wellposed in $H^s(\mathbb R^2)$ for any
$s>2$. See, for instance, \cite{K86, BM}. On the other hand the wellposedness in the borderline space $H^2(\mathbb R^2)$ remains unknown.
In a similar vein one can consider the wellposedness problem for
the vorticity equation \eqref{E_usual} in the borderline Sobolev spaces.
In this case since $\omega = O(\nabla u)$
it is tempting to think that local wellposedness holds in $H^s(\mathbb R^2)$ for any $s>1$.
However we should point out that this is \emph{not} the case due to some low frequency
issues introduced by the Biot--Savart relation $u=\Delta^{-1} \nabla^{\perp}\omega $. In particular under the
mere assumption $\omega \in H^s$
 the standard contraction argument no longer applies within the pure Lebesgue space framework (see Remark \ref{rem12} below for more details).
To rectify this  some amount of negative Sobolev regularity needs to be imposed on the vorticity.
For example one can prove wellposedness to \eqref{E_usual} in the space $\dot H^s \cap \dot H^{-1}$
or $\dot H^s \cap L^p$ for some $s>1$, $1<p<2$. Note that by \eqref{uw} the requirement $\omega \in \dot H^{-1} \cap \dot H^1$
is equivalent to the requirement $u \in H^2$. Thus for the vorticity equation the borderline space should be
 the space $\dot H^{-1} \cap \dot H^1$.

In this paper we consider the following generalized 2D Euler vorticity equation:
\begin{align} \label{E_new}
\begin{cases}
\partial_t \omega + u \cdot \nabla \omega = 0, \quad (t,x) \in
\mathbb R \times \mathbb R^2, \\
u = \nabla^{\perp} \psi, \; \Delta \psi = T_{\gamma} \omega, \\
\omega\big|_{t=0}=\omega_0.
\end{cases}
\end{align}
Here $T_{\gamma}=T_{\gamma}(|\nabla|)$ is a Fourier multiplier operator defined by
\begin{align*}
 \widehat{ T_{\gamma} \omega}(\xi) = \frac 1 {\log^{\gamma}(|\xi| +10)} \hat \omega(\xi)
\end{align*}
and $\gamma >0$ is a parameter. This operator introduces some additional logarithmic smoothing
of the velocity field through the second equation in \eqref{E_new}. The system \eqref{E_new} is
a model case considered in a recent paper by Chae and Wu \cite{CW_log}.
Among other results, they obtained the local wellposedness of \eqref{E_new} with initial data in the borderline Sobolev
spaces when $\gamma>1/2$. The corresponding global
wellposedness remains unknown unless some additional conditions are imposed on the initial data. Our main result is the following

\begin{thm}[Global wellposedness] \label{thm1}
Let $\gamma\ge 3/2$. Assume the initial data $\omega_0 \in \dot H^1(\mathbb R^2) \cap \dot H^{-1} (\mathbb R^2)$. Then there
exists a unique corresponding global solution $\omega$ to \eqref{E_new} in the space
$C([0,\infty), \dot H^1 \cap \dot H^{-1}) \cap C^1 ([0,\infty), L^2)$.
\end{thm}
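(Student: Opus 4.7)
The plan is to combine the local wellposedness of \eqref{E_new} in $\dot H^1\cap\dot H^{-1}$ from Chae--Wu \cite{CW_log} with a blow-up criterion of Beale--Kato--Majda type, namely that if the maximal lifespan $T^*$ is finite then $\int_0^{T^*}\|\nabla u(s)\|_{L^\infty}\,ds=\infty$, and to show that for $\gamma\ge 3/2$ this integral stays finite on every bounded time interval. The heart of the argument is a new logarithmic interpolation inequality,
\begin{align*}
 \|\nabla u\|_{L^\infty}\lesssim \|\omega\|_{\dot H^{-1}}+C_0\,\log\bigl(e+\|\omega\|_{\dot H^1}\bigr),
\end{align*}
with $C_0$ depending only on $\|\omega_0\|_{\dot H^1\cap\dot H^{-1}}$. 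Plugging this into the standard transport identity $\tfrac{d}{dt}\|\omega\|_{\dot H^1}^2\le 2\|\nabla u\|_{L^\infty}\|\omega\|_{\dot H^1}^2$ and applying Gronwall yields a double-exponential but finite bound, ruling out blow-up. Uniqueness and the $C^1_tL^2$ regularity come from standard $L^2$ difference estimates and from the equation itself.

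The crucial input is the conservation of every subcritical Lebesgue norm of the vorticity. Because $u$ is divergence-free and $\omega$ is transported, $\|\omega(t)\|_{L^p}=\|\omega_0\|_{L^p}$ for every $p\in[2,\infty)$, and in two dimensions the Brezis--Gallouet/Trudinger embedding gives $\|\omega_0\|_{L^p}\lesssim\sqrt{p}\,C_0$. This is the ``$L^{\infty-}$ conservation'' mentioned in the abstract: $p$-uniform control up to an unavoidable $\sqrt{p}$ loss as $p\to\infty$.

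To prove the inequality I would write $\nabla u=T_\gamma R\omega$ with $R$ a Fourier multiplier of Riesz type, perform a Littlewood--Paley decomposition $\omega=\sum_j\Delta_j\omega$, and split the sum at frequencies $j\le 0$, $0<j\le N$ and $j>N$ for a cut-off $N$ to be chosen. The low block is handled by $\|\Delta_j\omega\|_{L^2}\le 2^j\|\omega\|_{\dot H^{-1}}$ and sums to $\|\omega\|_{\dot H^{-1}}$. For the middle block I would pick the Bernstein exponent $p_j:=2j$ scale-by-scale so that the factor $2^{2j/p_j}$ stays bounded; combined with the $L^{p_j}$ conservation of $\omega$ this gives $\|\Delta_j\nabla u\|_{L^\infty}\lesssim j^{-\gamma}\|\omega\|_{L^{2j}}\lesssim C_0\,j^{1/2-\gamma}$, and $\gamma=3/2$ is exactly the threshold at which $\sum_{j\le N}j^{1/2-\gamma}$ diverges as $\log N$. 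The high block, by Cauchy--Schwarz against $\|\omega\|_{\dot H^1}$, contributes $N^{1/2-\gamma}\|\omega\|_{\dot H^1}$. Setting $N\sim e+\|\omega\|_{\dot H^1}$ makes the high piece $O(1)$ and delivers the claimed bound. A parallel but simpler dyadic estimate controls $\|u\|_{L^\infty}$ by $\|\omega\|_{\dot H^{-1}}+C_0$, which together with the commutator identity for $[(-\Delta)^{-1/2},u\cdot\nabla]\omega$ (whose $L^2$ norm is bounded by $\|u\|_{L^\infty}\|\omega\|_{L^2}$) produces $\tfrac{d}{dt}\|\omega\|_{\dot H^{-1}}^2\lesssim \|u\|_{L^\infty}\|\omega\|_{L^2}\|\omega\|_{\dot H^{-1}}$ and thus at worst exponential growth of $\|\omega\|_{\dot H^{-1}}$, which feeds back into the $\dot H^1$ Gronwall.

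The main obstacle is the interpolation inequality, and specifically the scale-by-scale choice $p_j=2j$. A single fixed $p$ cannot succeed: either the middle block acquires a polynomial dependence on $\|\omega\|_{\dot H^1}$ (beyond what Gronwall can absorb) or the Bernstein loss $2^{2j/p}$ cancels the logarithmic gain from $T_\gamma$. Linear growth of $p$ with the dyadic level is the only choice that balances the $\sqrt{p}$ Brezis--Gallouet loss against the $\log^{-\gamma}$ regularization, and $\gamma=3/2$ is the precise threshold at which this balance still yields only a single logarithm of $\|\omega\|_{\dot H^1}$, which is the largest growth compatible with global existence via Gronwall.
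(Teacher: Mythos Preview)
Your proposal is correct and follows essentially the same route as the paper: the key logarithmic interpolation inequality is proved by the identical mechanism of choosing a scale-dependent Lebesgue exponent $p_j\sim j$ in the middle Littlewood--Paley block, balancing the $\sqrt{p}$ loss from the $H^1\hookrightarrow L^p$ embedding against the $j^{-\gamma}$ gain from $T_\gamma$, with $\gamma=3/2$ the exact threshold for a single logarithm. The only cosmetic differences are that the paper controls the low-frequency piece of $\nabla u$ via $\|\omega\|_{L^2}$ rather than $\|\omega\|_{\dot H^{-1}}$, and handles the $\dot H^{-1}$ evolution by a direct integration by parts rather than your commutator formulation; both lead to the same log-Gronwall closure.
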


\begin{rem} \label{rem12}
We stress that the negative regularity assumption $\omega_0 \in \dot H^{-1}(\mathbb R^2)$ is essentially
needed in Theorem \ref{thm1}. In particular it cannot be replaced by $\omega_0 \in L^2(\mathbb R^2)$.
This is due to a subtle technical issue arising from the contraction argument in the construction of
local solutions. To see it, one can consider the task of proving the uniqueness of solutions in the
space $C_t^0 H_x^1$.\footnote{The same problem will appear in the contraction argument.}
Let $\omega_1$, $\omega_2 \in C_t^0 H^1_x$ be two solutions with the same initial data $\omega_0$.
Set $\tilde \omega = \omega_1-\omega_2$. Then $\tilde \omega$ satisfies the difference equation
\begin{align}
\partial_t \tilde \omega = -\Delta^{-1} \nabla^{\perp} T_{\gamma} \tilde \omega \cdot \nabla \omega_1
- \Delta^{-1} \nabla^{\perp} T_{\gamma} \omega_2 \cdot \nabla \tilde \omega \label{ROC_1}
\end{align}
with zero initial data. To complete the proof of uniqueness one needs to compute the $L^2$-norm
of $\tilde w$ and run a Gronwall in time argument using \eqref{ROC_1}. Whilst the second term on the
RHS of \eqref{ROC_1} can be easily handled using integration by parts, there is a difficulty in controlling
the first term. Namely the advection velocity $\Delta^{-1} \nabla^{\perp} T_{\gamma} \tilde \omega$
scales like $|\nabla|^{-1} \tilde \omega$ in the low frequency regime and we cannot put it in any Lebesgue
space \emph{using only the assumption $\tilde \omega \in H^1$}. This is the main reason why we need to introduce
some amount of negative regularity on $\omega$. Of course, we can also use the space $\dot H^{-\delta}$
for some $0<\delta \le 1$ and same results can be proved. However we shall not pursue this generality here.
\end{rem}

\begin{rem}
Theorem \ref{thm1} also holds in the periodic boundary condition case. In that situation we will consider
zero mean periodic flows and the $\dot H^1$ regularity is enough to close the estimates.
It is possible to generalize our
analysis to the critical Sobolev space $\dot W^{\frac 2p, p} (\mathbb R^2) \cap \dot W^{-1,p} (\mathbb R^2)$ for
any $1<p<\infty$. However we shall not pursue this issue here.
\end{rem}

\begin{rem}
It remains a very interesting question whether the condition $\gamma\ge 3/2$ in Theorem \ref{thm1} can be relaxed. In our argument, this condition is essentially used in the proof of Lemma \ref{lem1}.
\end{rem}

\subsubsection*{Notations and Preliminaries}
\begin{itemize}
\item For any two quantities $X$ and $Y$, we denote $X \lesssim Y$ if
$X \le C Y$ for some harmless constant $C>0$. Similarly $X \gtrsim Y$ if $X
\ge CY$ for some $C>0$. We denote $X \sim Y$ if $X\lesssim Y$ and $Y
\lesssim X$. We shall write $X\lesssim_{Z_1,Z_2,\cdots, Z_k} Y$ if
$X \le CY$ and the constant $C$ depends on the quantities
$(Z_1,\cdots, Z_k)$. Similarly we define $\gtrsim_{Z_1,\cdots, Z_k}$
and $\sim_{Z_1,\cdots,Z_k}$.

\item For any $f$ on $\mathbb R^d$, we denote the Fourier transform of
$f$ has
\begin{align*}
(\mathcal F f)(\xi) = \hat f (\xi) = \int_{\mathbb R^d} f(x) e^{-i \xi \cdot x}\,dx.
\end{align*}
The inverse Fourier transform of any $g$  is given by
\begin{align*}
(\mathcal F^{-1} g)(x) = \frac 1 {(2\pi)^d} \int_{\mathbb R^d}  g(\xi) e^{i x \cdot \xi} \,d\xi.
\end{align*}


\item For any $1\le p \le \infty$ we use $\|f\|_p$, $\|f \|_{L^p(\mathbb R^d)}$, or $\|f\|_{L^p_x(\mathbb R^d)}$ to  denote the
Lebesgue norm on $\mathbb R^d$. The Sobolev space $H^1(\mathbb R^d)$ is defined
in the usual way as the completion of $C_c^{\infty}$ functions under the norm $\|f\|_{H^1} = \|f\|_2 + \| \nabla f \|_2$.
For any $s\in \mathbb R$, we define the homogeneous Sobolev norm
\begin{align*}
\| f \|_{\dot H^s} = \Bigl(\int_{\mathbb R^d} |\xi|^{2s} |\hat f(\xi)|^2 d\xi \Bigr)^{\frac 12}.
\end{align*}
For any integer $n\ge 0$ and any open set $U\subset \mathbb R^d$,
we use the notation $C^n(U)$ to denote  functions on $U$ whose $n^{th}$ derivatives
are all continuous.

\item
 We will  need to use the
Littlewood--Paley frequency projection operators. Let $\varphi(\xi)$
be a smooth bump function supported in the ball $|\xi| \leq 2$ and
equal to one on the ball $|\xi| \leq 1$. For each dyadic number $N
\in 2^{\mathbb Z}$ we define the Littlewood--Paley operators
\begin{align*}
\widehat{P_{\leq N}f}(\xi) &:=  \varphi(\xi/N)\hat f (\xi), \notag\\
\widehat{P_{> N}f}(\xi) &:=  [1-\varphi(\xi/N)]\hat f (\xi), \notag\\
\widehat{P_N f}(\xi) &:=  [\varphi(\xi/N) - \varphi (2 \xi /N)] \hat
f (\xi). 
\end{align*}
Similarly we can define $P_{<N}$, $P_{\geq N}$, and $P_{M < \cdot
\leq N} := P_{\leq N} - P_{\leq M}$, whenever $M$ and $N$ are dyadic
numbers.

\item
We recall the following Bernstein estimates:  for any $1\le p\le q\le \infty$ and dyadic $N>0$,
\begin{align}
\| P_{N} f\|_{L_x^q (\mathbb R^d)} \lesssim_d N^{d(\frac 1p -\frac 1q)} \| f\|_{L_x^p(\mathbb R^d)}. \label{0}
\end{align}
Similar inequalities also hold when $P_N$ is replaced by $P_{<N}$ or $P_{\le N}$.
\end{itemize}

\subsection*{Acknowledgements}
H. Dong was partially supported by the NSF under agreements DMS-0800129 and DMS-1056737.
D. Li was supported in part by NSF under agreement No. DMS-1128155. Any opinions, findings
and conclusions or recommendations expressed in this material are those of the authors and
do not necessarily reflect the views of the National Science Foundation.
\section{the proof}

We begin with the following simple variant of the inequality \eqref{0}.  The main example in mind
is the Fourier multiplier
\begin{align*}
 m(\xi)= \frac 1 {\log^{\gamma} (|\xi|+10)}.
\end{align*}
It is not difficult to check that $m$ satisfies the bound \eqref{0a1} below with $\tilde m(N)= \log^{-\gamma} (\frac N8+10)$.

\begin{lem} \label{lem0}
Let $m \in C^{d+1}(\mathbb R^d\setminus \{0\})$ and such that for any dyadic $N>0$, there is a constant $\tilde m(N)$ so that
\begin{align}
 \sup_{\frac N8 \le |\xi| \le 8N} | \partial_{\xi}^{\alpha} m(\xi)|
 \lesssim_d \frac {\tilde m(N)} {N^{|\alpha|} }, \qquad \forall\, |\alpha|\le d+1. \label{0a1}
\end{align}
Let $T_m$ be the associated Fourier multiplier operator defined by
\begin{align*}
 \widehat{T_m f}(\xi) = m(\xi) \hat f(\xi).
\end{align*}
Then for any dyadic $N>0$, $1\le q \le \infty$, we have
\begin{align}
 \| T_m P_N f \|_q \lesssim_d \tilde m(N) \| P_N f \|_q. \notag
\end{align}

\end{lem}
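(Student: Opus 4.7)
The plan is to reduce this to an $L^1$-bound on the convolution kernel of $T_m P_N$ and then apply Young's inequality. Since $P_N f$ has Fourier support in the annulus $\{N/2 \le |\xi| \le 2N\}$, the multiplier $m$ only gets evaluated on that annulus; the first step is therefore to localize and rescale.

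Concretely, I would choose a smooth radial cutoff $\tilde\varphi$ which equals $1$ on $\{1/2 \le |\eta|\le 2\}$ and is supported in $\{1/8 \le |\eta| \le 8\}$, and set $m_N(\xi) := m(\xi)\,\tilde\varphi(\xi/N)$. Then $T_m P_N f = T_{m_N} P_N f$, so it is enough to estimate the convolution kernel $K_N := \mathcal F^{-1} m_N$. A change of variables shows
\begin{align*}
K_N(x) = N^d\, \check n(Nx), \qquad \text{where } n(\eta) := m(N\eta)\,\tilde\varphi(\eta),
\end{align*}
so $\|K_N\|_{L^1} = \|\check n\|_{L^1}$ and the problem is reduced to bounding $\|\check n\|_{L^1}$ by $\tilde m(N)$.

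Next I would check that $n$ is supported in the fixed annulus $\{1/8\le |\eta|\le 8\}$ and that $|\partial^\alpha n(\eta)| \lesssim_d \tilde m(N)$ for all $|\alpha|\le d+1$. This is a direct Leibniz computation:
\begin{align*}
\partial^\alpha n(\eta) = \sum_{\beta\le \alpha}\binom{\alpha}{\beta} N^{|\beta|}(\partial^\beta m)(N\eta)\,\partial^{\alpha-\beta}\tilde\varphi(\eta),
\end{align*}
and the hypothesis \eqref{0a1} gives $N^{|\beta|}|(\partial^\beta m)(N\eta)|\lesssim \tilde m(N)$ on the support of $\tilde\varphi$, while the derivatives of $\tilde\varphi$ are harmless constants.

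The main technical step is then to translate these uniform derivative bounds into a pointwise decay estimate on $\check n$. Standard integration by parts on the Fourier integral $\check n(y) = (2\pi)^{-d}\int n(\eta)e^{i y\cdot \eta}\,d\eta$ (for each $|y|\ge 1$, integrate by parts $d+1$ times against the vector field $|y|^{-2}y\cdot\nabla_\eta$) together with the compact support of $n$ yields
\begin{align*}
|\check n(y)| \lesssim_d \tilde m(N)\,(1+|y|)^{-(d+1)},
\end{align*}
which is integrable in $y$. Hence $\|\check n\|_{L^1} \lesssim_d \tilde m(N)$, and therefore $\|K_N\|_{L^1}\lesssim_d \tilde m(N)$. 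Young's inequality then closes the proof:
\begin{align*}
\|T_m P_N f\|_{L^q} = \|K_N * P_N f\|_{L^q} \le \|K_N\|_{L^1}\,\|P_N f\|_{L^q} \lesssim_d \tilde m(N)\,\|P_N f\|_{L^q}.
\end{align*}
I expect no serious obstacle here; the only mild delicacy is making sure that $d+1$ derivatives really suffice for the decay, which is precisely why the hypothesis is stated at that level of regularity.
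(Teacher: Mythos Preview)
Your proposal is correct and follows essentially the same route as the paper: insert a fattened cut-off, rescale to unit frequency, use the hypothesis to bound the derivatives of the localized symbol, deduce that the kernel lies in $L^1$, and finish with Young's inequality. The only cosmetic difference is that the paper phrases the decay step as ``$x^\alpha K(x)\in L^\infty$ for $|\alpha|\le d+1$'' rather than writing out the integration by parts.
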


\begin{proof}[Proof of Lemma \ref{lem0}]
By inserting a fattened cut-off if necessary we only need to prove
\begin{align*}
 \| T_m P_N f \|_q \lesssim_d \tilde m(N) \| f\|_q.
\end{align*}
By a scaling argument, it suffices to show that the kernel
\begin{align*}
 K(x) = \int_{\mathbb R^d} m(N\xi) \phi(\xi)e^{i\xi \cdot x} d\xi
\end{align*}
is in $L^1(\mathbb R^d)$. Here $\phi(\xi)= \varphi(\xi)-\varphi(2\xi)$ and $\varphi$ is the same function used in the definition
of the  Littlewood--Paley projection operators. Note that $\phi$ is supported on $|\xi| \sim 1$. By \eqref{0a1}, easy to check
that
\begin{align*}
 \max_{|\alpha|\le d+1 }
 \sup_{\xi \in \mathbb R^d}\left | \partial_{\xi}^{\alpha} \Bigl( m(N\xi) \phi(\xi) \Bigr) \right| \lesssim_d
\tilde m(N).
\end{align*}
Clearly then $ x^{\alpha} K(x) \in L^{\infty}(\mathbb R^d)$ for any $|\alpha|\le d+1$. Therefore $K \in L^1$ and the desired
inequality follows from Young's inequality.
\end{proof}

\begin{lem} \label{lem0a}
For any $f \in H^1(\mathbb R^2)$, we have
\begin{align}
 \| f \|_p \le C \cdot \sqrt p \| f \|_{H^1}, \qquad \forall\, 2\le p<\infty, \label{embed0}
\end{align}
where $C>0$ is an absolute constant.
\end{lem}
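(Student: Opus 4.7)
The plan is to apply a Littlewood--Paley decomposition together with the Bernstein inequality \eqref{0}, tracking carefully the $p$-dependence of the resulting geometric sums.

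First I would write $f=\sum_{N\in 2^{\mathbb Z}} P_N f$ and invoke Bernstein (with $d=2$, from $L^2$ into $L^p$) together with the triangle inequality to get
\begin{align*}
 \|f\|_p \le \sum_N \|P_N f\|_p \lesssim \sum_N N^{1-2/p}\|P_N f\|_2.
\end{align*}
Then I would split the sum into high frequencies $N>1$ and low frequencies $N\le 1$ and treat the two ranges separately.

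For the high-frequency piece, I would use $\|P_N f\|_2 \lesssim N^{-1}\|\nabla P_N f\|_2$ to rewrite the summand as $N^{-2/p}\|\nabla P_N f\|_2$ and then apply Cauchy--Schwarz:
\begin{align*}
 \sum_{N>1} N^{-2/p}\|\nabla P_N f\|_2 \le \Bigl(\sum_{N>1} N^{-4/p}\Bigr)^{1/2}\Bigl(\sum_{N>1}\|\nabla P_N f\|_2^2\Bigr)^{1/2}.
\end{align*}
The second factor is $\lesssim \|\nabla f\|_2$ by almost-orthogonality of the $P_N$'s. The first factor is the geometric series $\sum_{k\ge 1} 2^{-4k/p} = (2^{4/p}-1)^{-1}$, and the elementary asymptotic $2^{4/p}-1 \sim (4\ln 2)/p$ as $p\to\infty$ produces the crucial $\sqrt{p}$ factor.

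For the low-frequency piece, since $p\ge 2$ the exponent $1-2/p$ is nonnegative, so $N^{1-2/p}\le 1$ whenever $N\le 1$. Applying Cauchy--Schwarz to $\sum_{N\le 1} N^{1-2/p}\|P_N f\|_2$ gives a bound by $\bigl(\sum_{N\le 1} N^{2-4/p}\bigr)^{1/2}\|f\|_2$, where the dyadic geometric sum converges and is bounded by an absolute constant uniformly in $p\ge 2$. Combining the two contributions gives $\|f\|_p \lesssim \sqrt{p}\,(\|\nabla f\|_2 + \|f\|_2) = \sqrt{p}\,\|f\|_{H^1}$, as desired. The only nontrivial point is extracting the $p$-dependence of the high-frequency geometric sum; otherwise everything is a routine Littlewood--Paley estimate, and I do not anticipate any serious obstacle.
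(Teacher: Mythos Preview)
Your high-frequency argument is essentially identical to the paper's: Bernstein from $L^2$ to $L^p$, then Cauchy--Schwarz against the weights $N^{-2/p}$, with the $\sqrt p$ arising from $\bigl(\sum_{k\ge 0} 2^{-4k/p}\bigr)^{1/2}\sim \sqrt p$.

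There is, however, a slip in your low-frequency estimate. You claim that $\sum_{N\le 1} N^{2-4/p}$ is bounded by an absolute constant uniformly in $p\ge 2$, but at $p=2$ the exponent $2-4/p$ vanishes and the sum is $\sum_{k\ge 0} 1=\infty$; more generally the sum blows up like $(2-4/p)^{-1}$ as $p\downarrow 2$. So the Cauchy--Schwarz step as written does not give a uniform bound. The fix is immediate: instead of decomposing the low frequencies dyadically, simply apply Bernstein once to the block $P_{\le 1}f$ to get $\|P_{\le 1}f\|_p \lesssim \|P_{\le 1}f\|_2 \lesssim \|f\|_2$, with implicit constant independent of $p$ (since the frequency scale is fixed at $1$). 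This is exactly what the paper does, and with this adjustment your proof goes through.
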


\begin{proof}[Proof of Lemma \ref{lem0}]
By Bernstein, obviously
\begin{align*}
 \| P_{< 1} f \|_p \lesssim \| f\|_2.
\end{align*}
For the non-low frequency piece,  we have
\begin{align*}
 \| P_{\ge 1} f \|_p &\le \sum_{j=0}^{\infty} \| P_{2^j} f \|_p  \notag \\
& \lesssim \sum_{j=0}^{\infty} 2^{-\frac {2j}p}  2^j \| P_{2^j} f\|_2 \notag \\
& \lesssim \Bigl(\sum_{j=0}^{\infty} 2^{- \frac {4j}p} \Bigr)^{\frac 12} \| f \|_{\dot H^1} \notag \\
& \lesssim \sqrt p \| f \|_{\dot H^1}.
\end{align*}
\end{proof}

\begin{rem}
 The constant $\sqrt p$ in the inequality \eqref{embed0} is essentially sharp up to some logarithm factors
 (in terms of the dependence on $p$). To
 see this we consider  a radial function $f_p(x)=f_p(r)$ (we abuse slightly the notation here) defined by
 \begin{align} \notag
  f_p(r)= \begin{cases}
           \sqrt p, \quad r<e^{-p};\\
           \sqrt{-\log r},\quad e^{-p}\le r \le e^{-1}; \\
           \psi(r), \quad r \ge e^{-1},
          \end{cases}
 \end{align}
where $\psi$ is a smooth compactly supported function such that $\psi(e^{-1})=1$. Then easy to calculate that
$ \| f_p \|_2 \lesssim 1$  and $\| f_p \|_{\dot H^1} \lesssim \sqrt {\log p}$. On the other hand
$\| f_p \|_p \gtrsim \sqrt p$ so the sharp constant must be $\ge \sqrt{ p / \log p}$.
\end{rem}

Below is the key lemma in our proof of Theorem \ref{thm1}.
\begin{lem}  \label{lem1}
  Let $\gamma \ge \frac 32$. Then for any $f \in H^1(\mathbb R^2)$, we have
  \begin{align} \label{1}
   \Big\| \Bigl(\nabla \Delta^{-1} \nabla^{\perp} \log^{-\gamma} (|\nabla| +10)
   \Bigr)f \Big\|_{\infty} \le C_1 \cdot \log(\|f\|_{H^1} +e) \sup_{2\le p<\infty} \frac{\|f\|_p} {\sqrt p},
  \end{align}
where $C_1$ is an absolute constant.
\end{lem}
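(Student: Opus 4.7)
The plan is to perform a Littlewood--Paley decomposition of $f$ and bound each dyadic piece through Lemma \ref{lem0} combined with Bernstein, using the Orlicz-type bound $\|f\|_p\le \sqrt p\,A$ (where $A:=\sup_{2\le p<\infty}\|f\|_p/\sqrt p$) at a Lebesgue exponent $p$ that is optimally tuned to the dyadic scale. Observe first that $T:=\nabla\Delta^{-1}\nabla^{\perp}\log^{-\gamma}(|\nabla|+10)$ has Fourier symbol $m(\xi)=(\xi\otimes \xi^\perp/|\xi|^{2})\log^{-\gamma}(|\xi|+10)$, which satisfies the hypothesis of Lemma \ref{lem0} on every dyadic annulus with $\tilde m(N)\lesssim \log^{-\gamma}(N+10)$.

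I would write $f=P_{<1}f+\sum_{N\ge 1,\text{ dyadic}} P_N f$. The low-frequency term is easy: since the Fourier transform of $P_{<1}f$ is compactly supported, Plancherel and Cauchy--Schwarz give $\|TP_{<1}f\|_\infty\lesssim \|P_{<1}f\|_2\lesssim \|f\|_2\lesssim A$ (taking $p=2$ in the supremum defining $A$). For each dyadic $N\ge 1$, Lemma \ref{lem0} and Bernstein yield, for any $p\in[2,\infty)$,
\[
\|TP_N f\|_\infty\lesssim \log^{-\gamma}(N+10)\,\|P_N f\|_\infty\lesssim \log^{-\gamma}(N+10)\,N^{2/p}\,\|P_N f\|_p,
\]
and I would then use $\|P_N f\|_p\lesssim \|f\|_p\le \sqrt p\,A$ and choose $p=4\log(N+10)$, so that $N^{2/p}=O(1)$. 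This gives the single-scale bound
\[
\|TP_N f\|_\infty\lesssim \log^{\frac12-\gamma}(N+10)\,A.
\]

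For $\gamma>\frac32$ the series $\sum_N \log^{\frac12-\gamma}(N+10)$ converges and we are done. The whole difficulty is in the borderline case $\gamma=\frac32$, where the series is only $O(\log\log M)$ when truncated at scale $N\le M$. I would therefore split at a dyadic threshold $M$ chosen later: for $N>M$ switch to the Bernstein bound from $L^2$,
\[
\|TP_N f\|_\infty\lesssim \log^{-\gamma}(N+10)\,N\|P_N f\|_2,
\]
and apply Cauchy--Schwarz in $N$ together with the orthogonality of the Littlewood--Paley pieces in $\dot H^1$ to get
\[
\sum_{N>M}\|TP_N f\|_\infty\lesssim \Bigl(\sum_{N>M}\log^{-2\gamma}(N+10)\Bigr)^{\!1/2}\|f\|_{\dot H^1}\lesssim (\log M)^{\frac{1-2\gamma}{2}}\|f\|_{H^1},
\]
which for $\gamma=\frac32$ is $(\log M)^{-1}\|f\|_{H^1}$. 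Adding the two regimes produces
\[
\|Tf\|_\infty\lesssim A\bigl(1+\log\log (M+e)\bigr)+(\log M)^{-1}\|f\|_{H^1}.
\]
Optimising by balancing the two contributions (one takes $\log M$ of the order $\|f\|_{H^1}/A$, which is $\gtrsim 1$ by Lemma \ref{lem0a}) yields an estimate of the form $A\log(\|f\|_{H^1}/A+e)$, which in turn is controlled by $A\log(\|f\|_{H^1}+e)$ after absorbing the factor coming from Lemma \ref{lem0a}.

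The main obstacle is precisely the borderline exponent $\gamma=\frac32$: the $L^p$-based bound for each dyadic piece is only barely non-summable, so one cannot close the argument using only the Orlicz-type norm $A$. The splitting at a scale $M$ that depends on the ratio $\|f\|_{H^1}/A$, together with Cauchy--Schwarz on the high-frequency tail to generate the $\log^{-\gamma}$ gain (which requires $2\gamma>1$, automatic for $\gamma\ge \frac32$), is what converts the borderline divergence into the single logarithmic factor $\log(\|f\|_{H^1}+e)$ in the final inequality.
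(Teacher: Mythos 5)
Your argument is, up to the final step, the paper's own proof in slightly different clothing: the paper also peels off the low frequencies, applies Lemma \ref{lem0} on each Littlewood--Paley shell with $\tilde m(2^j)\sim j^{-\gamma}$, and runs Bernstein at the shell-dependent exponent $q_j=j$ --- which is exactly your tuning $p=4\log(N+10)$ --- to get the per-shell bound $j^{-(\gamma-\frac12)}\,\|f\|_{j}/\sqrt j$, and it handles the high-frequency tail by the same Cauchy--Schwarz step against $\dot H^1$, yielding $N^{(1-2\gamma)/2}\|f\|_{H^1}\le N^{-1}\|f\|_{H^1}$ for $\gamma\ge\frac32$. Your observation that for $\gamma>\frac32$ the shell sum converges outright, so that only the borderline $\gamma=\frac32$ forces the frequency splitting, is also correct and consistent with the paper.

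The genuine gap is your last step. Balancing $\log M\sim \|f\|_{H^1}/A$ gives you $A\bigl(1+\log(\|f\|_{H^1}/A+e)\bigr)$, and you then assert this is $\lesssim A\log(\|f\|_{H^1}+e)$ ``after absorbing the factor coming from Lemma \ref{lem0a}.'' But Lemma \ref{lem0a} gives $A\lesssim \|f\|_{H^1}$, i.e.\ a \emph{lower} bound on the ratio $\|f\|_{H^1}/A$ --- the wrong direction --- and no reverse bound exists: there is no lower bound on $A$ in terms of $\|f\|_{H^1}$. Concretely, replacing $f$ by $\varepsilon f$ leaves $\|f\|_{H^1}/A$ unchanged while $\log(\varepsilon\|f\|_{H^1}+e)\to 1$ as $\varepsilon\to 0$; or take a single wave packet at frequency $2^k$ normalized so that $\|f\|_{H^1}\sim 1$, for which $A\sim k^{-1/2}$, so that $\log(\|f\|_{H^1}/A+e)\sim \log k$ is unbounded while $\log(\|f\|_{H^1}+e)=O(1)$. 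So the inequality you need to close the proof is false in general, and your intermediate bound $A\log(\|f\|_{H^1}/A+e)$ is genuinely weaker than \eqref{1} in the small-$A$ regime. The repair is precisely what the paper does: do not optimize the cutoff using $A$ at all, but choose the splitting index (your $\log_2 M$) to be the integer $N$ with $N/2<\|f\|_{H^1}+e\le N$, depending on $\|f\|_{H^1}$ alone. Then the low part is $\lesssim \log(\|f\|_{H^1}+e)\,A$ directly, the tail is $\le N^{-1}\|f\|_{H^1}\le 1$, and the dangerous factor $\log(1/A)$ never appears; this is exactly how the paper concludes (and in the application to Theorem \ref{thm1} the quantity $A$ is a fixed constant determined by $\omega_0$, so the additive remainder is harmless there).
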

\begin{rem}
 As will become clear from the proof below, one can replace the operator $\nabla \Delta^{-1}  \nabla^{\perp}$ by any
 Riesz type operator. By Lemma \ref{lem0a} we have
 $$\sup_{2\le p<\infty} \frac{\| f \|_p}{\sqrt p} \lesssim \|f \|_{H^1}$$ so  that the RHS of \eqref{1} is well defined.
\end{rem}

\begin{proof}[Proof of Lemma \ref{lem1}]
 Denote $Tf=  \Bigl(\nabla \Delta^{-1} \nabla^{\perp} \log^{-\gamma} (|\nabla| +10)  \Bigr)f $. By Bernstein's inequality, we have
 \begin{align*}
  \| TP_{\le 2} f \|_{\infty} \lesssim \| T P_{\le 2} f\|_2  \lesssim \| f\|_2 \le \text{RHS of \eqref{1} }.
 \end{align*}
We only need to control the non-low frequency part of $f$.
Let $N$ be a dyadic number whose value will be specified later. Now split $f$ into low and high frequencies. By Lemma \ref{lem0}, we have
\begin{align*}
 \| TP_{\ge 2} f \|_{\infty} & \lesssim \sum_{j=2}^N \frac 1 {j^{\gamma}} \| P_{2^j} f \|_{\infty} +
 \sum_{j=N}^{\infty} \frac 1 {j^{\gamma}} \| P_{2^j} f \|_{\infty} \notag \\
 & \lesssim \sum_{j=2}^{N} \frac 1 {j^{\gamma}} \| f\|_{q_j} \cdot 2^{\frac {2j} {q_j} } +
 \sum_{j=N}^{\infty} \frac 1 {j^{\gamma}} \cdot 2^j \| P_{2^j} f \|_2.
\end{align*}
Choosing $q_j=j$ and using the fact that $\gamma \ge \frac 32$, we have
\begin{align*}
\| T P_{\ge 2}  f \|_{\infty} & \lesssim \sum_{j=2}^N \frac 1 {j^{\gamma-\frac 12}} \cdot \frac {\| f \|_j} {\sqrt j}
+ \Bigl( \sum_{j=N}^{\infty}  \frac 1 {j^{2\gamma}} \Bigr)^{\frac 12} \cdot \| f \|_{H^1} \notag \\
& \lesssim \log N \cdot (\sup_{2\le p<\infty} \frac {\| f \|_p} {\sqrt p}) + N^{-1} \| f \|_{H^1}.
\end{align*}
Now choose $N$ such that $N/2 < \| f \|_{H^1} +e \le N$. The desired inequality \eqref{1} follows.
\end{proof}

We are now ready to complete the

\begin{proof}[Proof of Theorem \ref{thm1}]
For the sake of completeness, we first sketch the proof of local existence and uniqueness. Start with uniqueness. Let $T_0>0$
 and let $\omega_1,\omega_2$ be two solutions to \eqref{E_new} with the same initial data $\omega_0$. The difference
 $\tilde \omega=\omega_1-\omega_2$ then satisfies the equation
 \begin{align}
\partial_t \tilde \omega = -\Delta^{-1} \nabla^{\perp} T_{\gamma} \tilde \omega \cdot \nabla \omega_1
- \Delta^{-1} \nabla^{\perp} T_{\gamma} \omega_2 \cdot \nabla \tilde \omega \notag
\end{align}
 with zero initial data. For $L^2$-norm, we compute
 \begin{align}
  \partial_t ( \| \tilde \omega \|_2^2) & \lesssim \| \Delta^{-1} \nabla^{\perp} T_{\gamma} \tilde \omega \|_{\infty}
 \| \nabla \omega_1 \|_2 \| \tilde \omega \|_2 \notag \\
  & \lesssim \| |\nabla|^{-1} \tilde \omega \|_2 \cdot \| \nabla \omega_1 \|_2 \cdot \| \tilde \omega \|_2 \notag \\
 & \qquad + \| \nabla \omega_1 \|_2 \| \tilde \omega \|_2^2. \label{s24_1}
 \end{align}
 For the $\dot H^{-1}$-norm, we have
 \begin{align}
   \partial_t ( \| \tilde \omega \|^2_{\dot H^{-1}}) & \lesssim
  \, \left| \int \Bigl( |\nabla|^{-1} \nabla \cdot ( \Delta^{-1} \nabla^{\perp} T_{\gamma} \tilde \omega \omega_1 )\Bigr)
  |\nabla|^{-1} \tilde \omega dx \right| \notag \\
  & \qquad + \left| \int \Bigl(|\nabla|^{-1} \nabla\cdot( \Delta^{-1} \nabla^{\perp} T_{\gamma} \omega_2 \tilde \omega)
 \Bigr) |\nabla|^{-1} \tilde \omega dx \right| \notag \\
 & \lesssim \, \| \Delta^{-1} \nabla^{\perp} T_{\gamma} \tilde \omega \|_{\infty} \| \omega_1 \|_2 \| \tilde \omega \|_{\dot H^{-1}} \notag\\
 & \qquad + \| \Delta^{-1} \nabla^{\perp} T_{\gamma} \omega_2 \|_{\infty} \cdot \| \tilde \omega \|_2
 \cdot \| \tilde \omega \|_{\dot H^{-1}} \notag \\
 & \lesssim \| \omega_1 \|_2 \Bigl( \| \tilde \omega \|^2_{\dot H^{-1}} + \| \tilde \omega \|_2 \| \tilde \omega \|_{\dot H^{-1}} \Bigr) \notag \\
 & \qquad + \| \omega_2 \|_{\dot H^1 \cap \dot H^{-1}} \| \tilde \omega \|_2 \cdot \|\tilde \omega \|_{\dot H^{-1} }. \label{s24_2}
 \end{align}

Adding together \eqref{s24_1} and \eqref{s24_2}, we get
\begin{align*}
 \partial_t ( \| \tilde \omega \|_2^2 + \| \tilde \omega \|^2_{\dot H^{-1}}) \lesssim_{\omega_1,\omega_2}
 \| \tilde \omega \|_2^2 + \| \tilde \omega \|^2_{\dot H^{-1}}.
\end{align*}
A simple Gronwall in time argument then yields $\tilde \omega =0$.

We turn now to the local existence in $C_t^0(\dot H^1 \cap \dot H^{-1})$.
This is fairly standard and we only sketch the details (see, for instance, \cite{BM}).
For any dyadic $N\ge 1$, consider the mollified equations\footnote{One can also use a slightly
different iteration scheme: $\partial_t \omega^{(k)} + \Delta^{-1} \nabla^{\perp} T_{\gamma} \omega^{(k-1)}
\cdot \nabla \omega^{(k)} =0$. cf. \cite{DL_fractional}.}
\begin{align*}
 \begin{cases}
  \partial_t \omega^{(N)} + P_{\le N} \Bigl(   \Delta^{-1} \nabla^{\perp} T_{\gamma} \omega^{(N)} \cdot \nabla
  P_{\le N} \omega^{(N)} \Bigr) =0, \\
  \omega^{(N)} \Bigr|_{t=0}= \omega_0,
 \end{cases}
\end{align*}
where $P_{\le N}$ is the usual Littlewood--Paley operator. By an ODE argument in Banach spaces it is easy to check
that there exists a unique solution $\omega^{(N)} \in C_t^0(\dot H^1 \cap \dot H^{-1})$. Moreover there exists
$T_0 = T_0(\| \omega_0\|_{\dot H^1 \cap \dot H^{-1}})>0$, $M_1>0$ such that
\begin{align*}
 \sup_{\substack{N>0 \\ \text{$N$ dyadic}}} \| \omega^{(N)} \|_{L_t^{\infty}{([0,T_0], \dot H^1 \cap \dot H^{-1})}} \le M_1<\infty.
\end{align*}
By using a calculation similar to \eqref{s24_1}--\eqref{s24_2}, it is not difficult to check that
$(\omega^{(N)})$ forms a Cauchy sequence in $C([0,T_0], L^2 \cap \dot  H^{-1})$ and hence admits a unique
limit point $\omega$. One can then use norm continuity along with weak continuity to show $\omega \in C([0,T_0], \dot H^1 \cap \dot H^{-1})$
 is the desired local solution. By using \eqref{E_new} it is easy to check that $\partial_t \omega \in C_t^0 L_x^2$ and
hence $\omega \in C_t^1 L_x^2$.

Finally we need to show that the local solution $\omega$ can be continued for all time. For this, it suffices to control
the $\dot H^1 \cap \dot H^{-1}$ norm  of $\omega$.

By \eqref{E_new}, we have for any $2\le p<\infty$,
\begin{align}
 \| \omega (t) \|_p \le \| \omega_0 \|_p, \qquad \forall\, t\ge 0. \label{s24_3}
\end{align}

By \eqref{s24_3}, Lemma \ref{lem1} and Lemma \ref{lem0a}, we get
\begin{align}
  \| \nabla \Delta^{-1} \nabla^{\perp} T_{\gamma} \omega(t) \|_{\infty}
  & \lesssim \log ( \| \omega \|_{H^1} +e) \cdot \sup_{2\le p<\infty} \frac{\|\omega_0\|_p}{\sqrt p} \notag \\
  & \lesssim \log ( \|\omega\|_{H^1} +e ) \cdot \| \omega_0 \|_{H^1}. \label{s24_4}
\end{align}

By \eqref{s24_3} and an argument similar to \eqref{s24_2} (one can just take $\omega_2$=0), we have
\begin{align*}
 \partial_t \Bigl( \|\omega \|^2_{\dot H^{-1}} \Bigr)
 \lesssim \| \omega_0 \|_2 \cdot \| \omega\|_{\dot H^{-1}}^2
 + \| \omega_0\|_2^2 \cdot \| \omega \|_{\dot H^{-1}}.
\end{align*}
Therefore the $\dot H^{-1}$-norm of $\omega$ is controlled for all time.

On the other hand, by \eqref{s24_4}, we have
\begin{align*}
 \partial_t \Bigl( \| \omega \|_{\dot H^1}^2 \Bigr) & \lesssim
 \| \nabla \Delta^{-1} \nabla^{\perp} T_{\gamma} \omega \|_{\infty} \cdot \| \omega \|_{\dot H^1}^2 \notag \\
 & \lesssim \, \| \omega_0 \|_{H^1} \cdot \log\left( \|\omega \|_{\dot H^1} + \| \omega_0\|_2 + e \right)
 \cdot \| \omega \|_{\dot H^1}^2.
\end{align*}
A log-Gronwall in time argument then yields that $\| \omega(t)\|_{\dot H^1}$ is bounded for all $t>0$.
{This completes the proof the theorem.}
\end{proof}

\end{document}